\documentclass[11pt]{amsart}
\usepackage{a4wide, amsfonts, amsmath, amssymb, amsthm, epsfig,cite, graphicx, hyperref, color,esint, fancyhdr, enumerate,latexsym, amsrefs, mathtools}

\setlength{\oddsidemargin}{0in} \setlength{\evensidemargin}{0in}
\setlength{\textwidth}{6.3in} \setlength{\topmargin}{-0.2in}

\newtheorem{thm}{Theorem}[section]
\newtheorem{prop}[thm]{Proposition}

\newtheorem{cor}[thm]{Corollary}
\theoremstyle{definition}

\newtheorem{rem}[thm]{Remark}
\newtheorem*{claim}{Claim}

\newcommand{\de}{\delta}

\newcommand{\pa}{\partial}

\newcommand{\Om}{\Omega}

\newcommand{\be}{\beta}

\newcommand{\ti}{\tilde}
\newcommand{\De}{\Delta}

\renewcommand{\Re}{\operatorname{Re}}

\numberwithin{equation}{section}

\newcommand\restr[2]{{% we make the whole thing an ordinary symbol
  \left.\kern-\nulldelimiterspace % automatically resize the bar with \right
  #1 % the function
  \littletaller % pretend it's a little taller at normal size
  \right|_{#2} % this is the delimiter
  }}

\newcommand{\littletaller}{\mathchoice{\vphantom{\big|}}{}{}{}}

\hypersetup{
    colorlinks,
    citecolor=blue,
    filecolor=black,
    linkcolor=red,
    urlcolor=black
}

\title{Limits of an increasing sequence of Riemann surfaces}
\keywords{Union problem, Kobayashi hyperbolic, Riemann Surface}
\subjclass{Primary: 32F45; Secondary: 30F45}
\author{Diganta Borah, Prachi Mahajan and Jiju Mammen}

\address{Diganta Borah: Indian Institute of Science Education and Research Pune, Pune  411008, India}
\email{dborah@iiserpune.ac.in}

\address{Prachi Mahajan: Department of Mathematics, Indian Institute of Technology Bombay, Powai, Mumbai 400076, India}
\email{prachi.mjn@iitb.ac.in}

\address{Jiju Mammen: Department of Mathematics, Indian Institute of Technology Palakkad, 678557, India}
\email{211914001@smail.iitpkd.ac.in}

\begin{document}
\maketitle
\begin{abstract}
Let $M$ be a Riemann surface which admits an exhaustion by open subsets $M_j$ each of which is biholomorphic to a fixed domain $\Omega \subset \mathbb{C}$. We describe $M$ in terms of $\Omega$ under various assumptions on the boundary components of $\Omega$. 
\end{abstract}

\maketitle

\section{Introduction}

Let $ M $ be a complex manifold, exhausted by
open Stein submanifolds $ \{M_j\}_{j=1}^{\infty}$ such that $ M_1 \subset M_2 \subset \ldots $. Is $ M $ Stein? 

The above question was posed by Behnke and Thullen \cite{BT} in 1933. An affirmative answer to this question was provided by Behnke and Stein \cite{BS-39} in case $ M $ is an open subset of $ \mathbb{C}^n$, for $ n \geq 1 $. Indeed, Behnke and Stein proved  that the union of an increasing sequence of domains of holomorphy is a domain of holomorphy. This result was a crucial ingredient in solving the classical Levi problem---whether a pseudoconvex domain in $ \mathbb{C}^n $ is a domain of holomorphy. On the other hand, Forn\ae ss showed that the answer to the above question is negative in general by providing explicit counter-examples, first in dimension $\geq 3$ \cite{For-76} and then in dimension $2$ (see \cite{For-77}). It is to be noted that in dimension one, the above question reduces to a topological one by a result of Behnke-Stein \cite{BS-49}---a Riemann surface is Stein if and only if it is non-compact---from which it follows immediately that the answer to the above question is positive. The counterexamples due to Forn\ae ss were the rationale behind studying the following more general problem:

Let $ M $ be a complex $ n$-dimensional manifold admitting an exhaustion by open subsets, each of which is biholomorphic to a fixed domain $ \Omega $ in $ \mathbb{C}^n $. Determine $ M $ in terms of $ \Omega $.

This problem has been investigated by many authors under diverse assumptions on the domain $\Om$. We refer to the papers \cite{BBMV}, \cite{Behrens}, \cite{DS}, \cite{FP}--\cite{Fridman2}, and  \cite{Liu}, and the references therein. To the best of our knowledge, the problem when $\Omega$ has punctures (i.e., isolated boundary points) has not been studied, possibly because in dimension $ n \geq 2$, every isolated singularity of a holomorphic function is removable. Thus, it is of independent interest to investigate this problem in dimension $n=1$ to accommodate such domains which is the motivation behind this note. To summarise, the purpose of this article is to describe Riemann surfaces $ M$ which are unions of increasing sequences of open subsets $ M_j $, each $M_j$ being biholomorphically equivalent to a fixed domain $ \Omega \subset \mathbb{C} $. Henceforth, this problem will be referred to as the \textit{union problem in dimension one}. 

\begin{thm} \label{T1}
Assume that in the union problem, $ M $ is a hyperbolic Riemann surface. 

\begin{enumerate}
 \item [(i)] If $ \Omega $ is a bounded domain in $ \mathbb{C} $ with $C^2$-smooth boundary, then  $ M $ is biholomorphic either to $ \Omega $ or to the unit disc $ \Delta $.

 \item [(ii)] If $ \Omega = D \setminus A $, where $ D $ is a bounded domain in $ \mathbb{C} $ with $C^2$-smooth boundary and $ A \subset D $ is finite, then  $ M $ is biholomorphic either to $ \Omega $ or to the unit disc $ \Delta $.
 
 %\item [(iii)] If $ \Omega = D \setminus A $, where $ D $ is a bounded domain in $ \mathbb{C} $ with $C^2$-smooth boundary, $ A \subset D $ is discrete and the set $ A' $ of accumulation points of $ A $ is closed in $ \Delta $, then 
\end{enumerate}
 
\end{thm}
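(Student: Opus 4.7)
The plan is to exploit normal families of the biholomorphisms $\phi_j\colon \Omega \to M_j$ and of their inverses $\psi_j = \phi_j^{-1}$. Fix a base point $p_0 \in M_1$ and set $\xi_j = \psi_j(p_0) \in \Omega$. Since $\Omega$ is bounded, $\{\psi_j\}$ is a normal family on every compact subset of $M$ (which lies in $M_j$ for $j$ large), and since $M$ is hyperbolic $\{\phi_j\}$ is also normal in the Kobayashi sense. After passing to a subsequence I may assume $\xi_j \to \xi_\infty \in \overline{\Omega}$, and I would split the argument according to whether $\xi_\infty \in \Omega$ or $\xi_\infty \in \partial \Omega$.

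If $\xi_\infty \in \Omega$, I would extract subsequential limits $\psi\colon M \to \overline{\Omega}$ of $\{\psi_j\}$ and $\phi\colon \Omega \to M$ of $\{\phi_j\}$; the latter does not diverge compactly because $\phi_j(\xi_j) = p_0$ is fixed. The identity $d_\Omega(\xi_j, \psi_j(y)) = d_{M_j}(p_0, y)$ together with the standard convergence $d_{M_j} \to d_M$ on compacta shows $\psi$ is non-constant, and the metric relation $h_\Omega(\xi_\infty) = |\phi_j'(\xi_\infty)|\, h_{M_j}(\phi_j(\xi_\infty))$ with $h_{M_j}(\phi_j(\xi_\infty)) \to h_M(p_0) > 0$ does the same for $\phi$; Hurwitz's theorem then gives injectivity of both. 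The open mapping theorem, supplemented in case (ii) by a Hurwitz argument applied to $\psi_j - a$ for each $a \in A$, places the images in $\Omega$ and $M$ respectively. Passing to the locally uniform limit in $\psi_j \circ \phi_j = \mathrm{id}_\Omega$ and $\phi_j \circ \psi_j = \mathrm{id}_{M_j}$ shows that $\phi$ and $\psi$ are mutual inverses, hence $M$ is biholomorphic to $\Omega$.

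If $\xi_\infty \in \partial \Omega$, the first step is to show that $\psi_j \to \xi_\infty$ uniformly on compact subsets of $M$: any non-constant subsequential limit $\psi$ would, by the open mapping theorem (and the Hurwitz argument of the previous paragraph in case (ii)), have image in $\Omega$, contradicting $\psi(p_0) = \xi_\infty \in \partial \Omega$. Given any loop $\gamma \subset M$, for $j$ large $\psi_j(\gamma)$ therefore lies in an arbitrarily small neighbourhood of $\xi_\infty$ in $\Omega$. When $\xi_\infty$ is a smooth boundary point of $D$, this neighbourhood intersected with $\Omega$ is a simply connected half-disc, so $\psi_j(\gamma)$ is null-homotopic in $\Omega$; then $\gamma = \phi_j(\psi_j(\gamma))$ is null-homotopic in $M_j$ and hence in $M$, giving $[\gamma] = 1$ in $\pi_1(M)$.

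When $\xi_\infty = a \in A$, which occurs only in (ii), a small neighbourhood of $a$ in $\Omega$ is a punctured disc and $\psi_j(\gamma)$ winds some integer $k_j$ times around $a$. The hyperbolic-length identity $\ell_\Omega(\psi_j(\gamma)) = \ell_{M_j}(\gamma) \to \ell_M(\gamma) > 0$, together with the classical asymptotic $h_\Omega \sim h_{\Delta^*}$ at a puncture and the lower bound $\ell_{\mathbb{H}}(\tilde \sigma_j) \geq d_{\mathbb{H}}(w_0, w_0+k_j)$ for the lift $\tilde \sigma_j$ of $\psi_j(\gamma)$ to the universal cover of the punctured-disc neighbourhood, forces $|k_j| \to \infty$. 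Since $M$ is planar (planarity passes to the increasing union because any simple closed curve in $M$ lies in some planar $M_j$, where it separates, and the separation propagates through the ascending chain $M_j \subset M_{j+1} \subset \cdots$), $\pi_1(M)$ is free; in a free group no non-trivial element is a $k$th power for arbitrarily large $k$, so $[\gamma] = 1$ in $\pi_1(M)$. Either way $\pi_1(M) = 1$, and being hyperbolic $M$ is biholomorphic to $\Delta$. I expect the puncture sub-case to be the main obstacle, as it combines a delicate hyperbolic-metric comparison at a cusp with a rigidity argument in the free fundamental group of the planar limit.
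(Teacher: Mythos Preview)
Your treatment of the interior case and the smooth--boundary case is correct and in fact more self-contained than the paper's: where the paper invokes Fridman's lemma and the scaling technique, you run the normal-families/Hurwitz argument by hand and then deduce $\pi_1(M)=1$ by pushing loops into a simply connected half-disc near $\xi_\infty$. That is a legitimate alternative.

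The gap is in the puncture sub-case. Your chain of implications does not yield $|k_j|\to\infty$. With the cusp model $\Delta^{*}\cong \mathbb{H}/(w\mapsto w+2\pi)$, the lift $\tilde\sigma_j$ of $\psi_j(\gamma)$ joins $w_{0,j}$ to $w_{0,j}+2\pi k_j$, and since $\psi_j(\gamma)\to a$ uniformly one has $y_j:=\mathrm{Im}\,w_{0,j}\to\infty$. The inequality you invoke, $\ell_{\mathbb{H}}(\tilde\sigma_j)\ge d_{\mathbb{H}}(w_{0,j},w_{0,j}+2\pi k_j)$, combined with the \emph{upper} bound $\ell_{\mathbb{H}}(\tilde\sigma_j)\approx \ell_\Omega(\psi_j(\gamma))\to \ell_M(\gamma)$, only gives $d_{\mathbb{H}}(w_{0,j},w_{0,j}+2\pi k_j)\le C$, i.e.\ $|k_j|/y_j$ bounded. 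This is perfectly compatible with $k_j$ bounded (indeed then $d_{\mathbb{H}}\to 0$). The positivity $\ell_M(\gamma)>0$ is a \emph{lower} bound on the length, but the displayed inequality points the other way and says nothing about how large $|k_j|$ must be; there is no closed geodesic at a cusp, so the free-homotopy class $\alpha^{k}$ has infimum length $0$ for every $k$. Consequently the ``arbitrarily high power in a free group'' step never fires, and the argument stalls.

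The paper avoids this difficulty entirely by showing that the orbit \emph{cannot} accumulate at a puncture. The key input is a big-Picard/Kwack-type extension: since $M$ is hyperbolic and $\phi_j(\psi_j(p_0))=p_0\to p_0$, the maps $\phi_j$ extend holomorphically across the puncture $a$ to $\Phi_j$ with values in $M_j$ (Proposition~3.2 in the paper). Setting $q_j=\Phi_j(a)\in M_j$, one finds that $\Phi_j$ takes the value $q_j$ both at $a$ and at $\psi_j(q_j)\neq a$, contradicting injectivity of $\phi_j$. This removable-singularity step is what your proposal is missing; once the puncture case is excluded, your smooth-boundary argument (or the paper's scaling) finishes the proof.
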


Some remarks are in order. First, if the Riemann surface $M$ in the union problem is non-hyperbolic, it follows from the uniformization theorem that $M$ is either $\mathbb{C}$ or the punctured plane $\mathbb{C}^{*}=\mathbb{C} \setminus \{0\}$. Furthermore, it turns out that $ M $ must be biholomorphic to $ \mathbb{C} $ if the domain $ \Omega $ is smoothly bounded (see Subsection~\ref{S2}). These two disparate cases---$M$ non-hyperbolic and hyperbolic---are inspired by Forn\ae ss and Sibony's work \cite{FS1981}. They demonstrated that the notion of Kobayashi hyperbolicity can be employed effectively to explore certain aspects of the union problem. 

Second, the domains $ \Omega $ dealt with in Theorem \ref{T1} are all finitely connected, i.e, their complement in the Riemann sphere $ \mathbb{C}_{\infty} $ have finitely many connected components. What happens to $M$ when $\Omega$ is infinitely connected is not clear as there is no canonical choice for $\Omega$ in this case. We attempt to understand this situation in Section \ref{S6}, by considering two explicit examples of infinitely connected domains $ \Omega $.  

Finally, the next theorem follows from Theorem~\ref{T1} and the remarks made in Subsection~\ref{S2} but is worth explicit mention here. 

\begin{thm} \label{T5} In the union problem in dimension one, 
\begin{enumerate}
    \item[(i)] if $ \Omega = \Delta $, then $ M $ is biholomorphic either to $ \Delta $ or to $ \mathbb{C} $.
    \item[(ii)] if $ \Omega = \{ z \in \mathbb{C}: r < |z |< 1 \}$, $ 0 < r<1$, then $ M $ is biholomorphic to one of $ \Omega $, $ \Delta $, or $\mathbb{C}$.    
    \item[(iii)] if $ \Omega = \mathbb{C}$, then $ M $ is biholomorphic to $  \mathbb{C} $.
    %\item[(iii)] if $ \Omega = \mathbb{C}^{*}$, then $ M $ is biholomorphic to $ \mathbb{C}^{*}$.
    \item [(iv)] if $\Omega=\mathbb{C} \setminus \{a_1, \ldots, a_n\}$, $n \geq 1$, then $M$ is biholomorphic to $\Omega$.
    \end{enumerate}
\end{thm}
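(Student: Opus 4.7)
The plan is to deduce each clause from Theorem~\ref{T1} together with the observations in Subsection~\ref{S2}, recalling in particular that a non-hyperbolic $M$ is biholomorphic to $\mathbb{C}$ or $\mathbb{C}^{*}$ by uniformization, and must be biholomorphic to $\mathbb{C}$ whenever $\Omega$ is smoothly bounded.

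Parts (i) and (ii) are immediate: in each case $\Omega$ (the disc, respectively an annulus) is bounded with $C^{2}$ boundary, so Theorem~\ref{T1}(i) handles the hyperbolic case---giving $M\cong\Delta$ in (i), and $M\cong\Omega$ or $\Delta$ in (ii)---while the Subsection~\ref{S2} remark above gives $M\cong\mathbb{C}$ in the non-hyperbolic case.

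For (iii) and (iv), write $\Omega=\mathbb{C}\setminus\{a_{1},\ldots,a_{n}\}$ with $n\geq 0$. The essential point is a rigidity statement for these domains: every injective holomorphic self-map $f\colon\Omega\to\Omega$ is in fact a biholomorphism. Indeed, $f$ has an isolated singularity at each of the $n+1$ points of $\mathbb{C}_{\infty}\setminus\Omega$, and none of these can be essential: for $n\geq 2$ this is Picard's great theorem (since $f$ avoids the $n\geq 2$ values $a_{1},\ldots,a_{n}$), and for $n=0,1$ it follows from injectivity together with the fact that, near an essential singularity, every value with at most one exception is attained infinitely often. Hence $f$ extends to a rational map $\widehat f\colon\mathbb{C}_{\infty}\to\mathbb{C}_{\infty}$; being injective, it is a M\"obius transformation; and since it maps $\Omega$ into $\Omega$, it permutes the finite set $\mathbb{C}_{\infty}\setminus\Omega$ and is a biholomorphism of $\Omega$. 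Now fix biholomorphisms $\phi_{j}\colon\Omega\to M_{j}$ and apply the rigidity statement to $\phi_{j+1}^{-1}\circ\phi_{j}\colon\Omega\to\Omega$: this composition is a biholomorphism, forcing $M_{j}=\phi_{j}(\Omega)=\phi_{j+1}(\Omega)=M_{j+1}$ as subsets of $M$. Iterating, $M=\bigcup_{j}M_{j}=M_{1}\cong\Omega$, which reads as $M\cong\mathbb{C}$ in (iii) and $M\cong\Omega$ in (iv).

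I do not expect a substantial obstacle beyond the bookkeeping in verifying the rigidity claim for $n=0,1$, where a direct appeal to Picard by omitted-value count is unavailable and one must invoke instead the infinite-multiplicity behavior near an essential singularity to contradict injectivity.
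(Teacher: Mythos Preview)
Your proof is correct. Parts (i) and (ii) match the paper's approach essentially verbatim. For (iii) and (iv), however, you take a genuinely different and more elementary route. The paper treats (iii) via simple connectivity plus non-hyperbolicity, and splits (iv) into cases: for $n=1$ it embeds $M$ in $\mathbb{C}$ via uniformization and argues that each $M_j$ must equal $\mathbb{C}\setminus\{p\}$ for a fixed $p$; for $n\geq 2$ it first proves $M$ is hyperbolic (by a similar embedding-and-contradiction argument) and then invokes the machinery of Theorem~\ref{T1}---the orbit cannot accumulate at a puncture, nor at $\infty$, so it stays relatively compact and tautness finishes the job. Your rigidity argument, by contrast, handles all $n\geq 0$ uniformly: an injective holomorphic self-map of $\mathbb{C}\setminus\{a_1,\ldots,a_n\}$ extends (via Picard/essential-singularity considerations) to a M\"obius transformation permuting the finite complement, hence is an automorphism; applying this to $\phi_{j+1}^{-1}\circ\phi_j$ forces $M_j=M_{j+1}$ for all $j$ and collapses the exhaustion. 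This bypasses Theorem~\ref{T1}, Proposition~\ref{extn}, and the tautness lemma entirely, relying only on classical one-variable facts. The paper's route has the virtue of illustrating the general orbit-accumulation method developed there; yours is shorter, self-contained, and avoids the $n=1$ versus $n\geq 2$ split.
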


\section{Preliminaries}

\subsection{Some notations:}

\begin{itemize}
 
 \item $ \Delta(a;r) $ is the open disc centered at point $ a \in \mathbb{C} $ and radius $ r > 0 $. The unit disc $ \Delta (0;1) $ is written as $ \Delta $
 for simplicity.

 \item $ \Delta^* $ is the punctured unit disc $ \Delta \setminus \{0\} $.

 \item $\mathbb{C}^{*}$ is the punctured plane $\mathbb{C} \setminus \{0\}$.
 
 \item For non-empty sets $ \Omega_1, \Omega_2 \subset \mathbb{C} $, 
 \[ \mbox{dist}( \Omega_1, \Omega_2) := \inf \big\{ |z-w| : z \in \Omega_1,  w \in \Omega_2 \big\},
 \]
where $ |\cdot| $ denotes the standard Euclidean norm on $ \mathbb{C} $.

\item $\mathbb{C}_{\infty}$ is the Riemann sphere.

\item If $\Omega \subset \mathbb{C}$, then $\pa_{\infty}\Omega$ is the boundary of $\Omega$ in $\mathbb{C}_{\infty}$.
 
\end{itemize}

\subsection{Some definitions:}\label{S2-0} Recall that a complex manifold $ X $ is said to be Kobayashi hyperbolic (or simply hyperbolic) if the
Kobayashi pseudo-distance $ d_X $ is actually a distance: namely, for all $ z,w \in X $, $ d_X (z,w) = 0 $ if and only if $ z=w $. It is well known (see \cite[Theorem~2]{Roy}) that $ X $ is hyperbolic if for each point $ z \in X $, there is a neighbourhood $ U $ of $ z $ and a positive constant $ c $ such that the infinitesimal Kobayashi metric $ k_X $ satisfies
\begin{equation} \label{E9}
    k_X(w, \xi) \geq c \; \| \xi\|, 
\end{equation} 
for all $ w \in U $ and for each holomorphic tangent vector $ \xi $ at the point $ w $.

It is worthwhile mentioning that the notion of Kobayashi hyperbolicity coincides with the notion of hyperbolicity in the context of Riemann surfaces - any Riemann surface
uniformized by the unit disc is Kobayashi hyperbolic.

We also recall that a complex manifold $X$ is taut if the family $ \mathcal{O}(\Delta, X ) $ of holomorphic maps from $ \Delta $ to $ X $ is normal, i.e., each sequence in $ \mathcal{O}(\Delta, X ) $ admits a subsequence that is either uniformly convergent on compact subsets of $ \Delta $ or compactly divergent. It is well-known that a Riemann surface $X$ is taut if and only if it is hyperbolic. Indeed, if $X$ is taut, then by Proposition~5 of \cite[Page~135]{Roy}, $X$ is hyperbolic. Conversely, if $X$ is a hyperbolic Riemann surface then it is complete as its universal cover $\Delta$ is complete (see Theorem~4.7 of \cite[Chapter IV]{Kob}), and hence the corollary in \cite[Page~136]{Roy}, implies that $X$ must be taut.

A sequence of domains $ G_j \subset \mathbb{C}^n $ is said to converge to a domain $ G \subset \mathbb{C}^n $ in the local Hausdorff sense if and only if the following two conditions hold:
\begin{itemize}
    \item Given any compact set $ K \subset G $, there is a positive integer $m$ such that for every $j \geq m$, $ K \subset G_j $, and
    \item If a compact set $ K $ is contained in the interior of $ \cap_{j \geq m}G_j $ for some positive integer $m $, then $ K \subset G $.
\end{itemize}

Lastly, we also recall the concept of a cluster set of a function $f: G \to \mathbb{C}$, where $G \subset \mathbb{C}$ is an open set, at a point $a \in \pa_{\infty}G$. The cluster set of $f$ at $a$ is the set
\begin{multline*}
\text{Clu}(f;a)=\Big\{\zeta \in \mathbb{C}_{\infty} : \text{there exists a sequence $\{z_n\} \subset G$}\\
\text{such that $z_n \to a$ and $f(z_n) \to \zeta$ in $\mathbb{C}_{\infty}$}\Big\}.
\end{multline*}
For more on this concept we refer to \cite[Section 14.1]{Con2}.

 \subsection{Some observations about the union problem:} \label{S2}

%Let us note the following: 
Firstly, it is straightforward to see that if $ \Omega $ is simply connected, then $ M $ is also simply connected.

Secondly, if $ \Omega $ is non-hyperbolic, then so is $ M $. Indeed, if $ \Omega $ is non-hyperbolic, then each $ M_j $, being a biholomorph of $ \Omega $, is also non-hyperbolic. Then the distance-decreasing property for the Kobayashi pseudo-distance under the inclusion map $ M_j \xhookrightarrow{} M $ forces that $ M $ cannot be hyperbolic.

Thirdly, the union $ M $ can never be compact. We shall establish this by contradiction. Assume that $ M $ is compact. Then, $\{M_j\}_{j=1}^{\infty}$ being an open cover of $M$, must admit a finite subcover. This, together with the fact that $ \{M_j\} $ is increasing, implies that there exists an integer $N\in \mathbb{N}$ such that $M_N=M$. Therefore $M$ is biholomorphic to $\Omega$ which is a contradiction as $\Omega$ is non-compact.  %$ M \setminus M_j $ form a decreasing sequence of compact subsets of $ M $ and hence, must have a non-empty intersection. But this, in turn, implies that the collection $ \{ M_j \} $ cannot exhaust $ M $, which is a contradiction.

Finally, since $ M $ is necessarily non-compact, it follows from the uniformization theorem for Riemann surfaces that $ \mathbb{C}_{\infty} $ cannot be the universal covering space of $ M $. As a consequence, it is immediate that the universal covering space of $ M $ is either $ \Delta $ or $ \mathbb{C} $. In particular, if $ M $ is non-hyperbolic, it follows that $ M $ must be covered by $ \mathbb{C} $. It is well-known that the only Riemann surfaces having $ \mathbb{C} $ as the universal covering surface are $ \mathbb{C} $ itself, the punctured complex plane $ \mathbb{C}^{*} $, and tori. The third case is ruled out by the fact that $M$ is non-compact and complex tori are compact. Hence, if $ M $ is a non-hyperbolic Riemann surface, then $ M $ is biholomorphic either to $ \mathbb{C} $ or to $ \mathbb{C}^{*} $.
Moreover, under the additional hypothesis that $ \Omega $ is a bounded domain with $C^2$-smooth boundary, then only the case $ M $ biholomorphic to $ \mathbb{C} $ is possible. To see this, note that \cite[Main Theorem]{FS1981} or \cite[Theorem 1.5]{Behrens} imposes that $ M $ is biholomorphic to a locally trivial holomorphic fibre bundle with fibre $ \mathbb{C} $ over a retract $ Z $ of $ \Omega $ or a retract $ Z' $ of $ \Delta $. Recall that  holomorphic retracts of a planar domain are either points or the whole domain itself. If $ Z = \Omega $ or $ Z' = \Delta $, then $ M $ will be biholomorphic to a $2$-dimensional manifold, which is impossible. Thus, $ Z $ and $ Z' $ must be singletons. It follows that $ M $ must be biholomorphic to $ \mathbb{C} $.

\section{When $ M $ is hyperbolic} \label{S4}

\subsection {An useful result: }
Let us first note the following version of a result due to Joseph and Kwack \cite[Theorem 1]{JK-1994}, which will be used in the sequel.

\begin{prop} \label{T4}\textup{(special case of the implication (1) $\Rightarrow$ (3) of Theorem 1, \cite{JK-1994})} 
Let $ X_j $ be an increasing sequence of open subsets of a Kobayashi hyperbolic complex manifold $ X $ that exhausts $ X $, i.e., $ X = \cup_{j=1}^{\infty} X_j $. If $ f_j : \Delta^* \rightarrow X_j $ is a sequence of holomorphic mappings and $ \{ z_j \} $ is a sequence in $ \Delta^* $ such that $ z_j \rightarrow 0 $ and $ f_j(z_j) \rightarrow z_0 \in X $, then the following two conditions hold:

\begin{itemize}
    \item Eventually each $ f_j $ extends to a holomorphic mapping $ {F_j} : \Delta \rightarrow X $, and
    \item there is an $ 0 < r < 1 $ and a subsequence $ \{ F_{j_k} \} $ of $ \{ F_j \} $ such that $ \{ F_{j_k} \} $ converges uniformly on compacts of $ \Delta(0;r) $ to a holomorphic mapping $ F : \Delta(0;r) \rightarrow X $. 
\end{itemize}
    
\end{prop}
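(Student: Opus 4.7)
The plan is to derive the proposition from \cite[Theorem~1]{JK-1994}, the implication (1) $\Rightarrow$ (3) of which is precisely what is asserted here once we verify that the hypothesis $z_j \to 0$ with $f_j(z_j) \to z_0 \in X$ realizes condition (1) of that theorem. For the reader's benefit, I will also sketch the underlying ideas.

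For the first bullet (existence of extensions $F_j$ for $j$ large), the main tool is the distance-decreasing property: since $X_j \subset X$ gives $d_X \leq d_{X_j}$ on $X_j$, and $f_j : \Delta^* \to X_j$ is holomorphic,
\[
d_X(f_j(z), f_j(w)) \leq d_{X_j}(f_j(z), f_j(w)) \leq d_{\Delta^*}(z, w)
\]
for all $z, w \in \Delta^*$. The $d_{\Delta^*}$-diameter of the circle $\{|z| = |z_j|\}$ equals $\pi / \log(1/|z_j|)$ and tends to zero, so combined with $f_j(z_j) \to z_0$, the images $f_j(\{|z| = |z_j|\})$ collapse to $\{z_0\}$ in $X$ as $j \to \infty$. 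Since the annuli $\{|z_j|/c \leq |z| \leq c|z_j|\}$ (for any fixed $c > 1$) also have $d_{\Delta^*}$-diameter tending to zero as $|z_j| \to 0$, a propagation argument yields, for each sufficiently large $j$, a sequence in $\Delta^*$ converging to $0$ along which $f_j$ converges in $X$. Kwack's removable singularity theorem for holomorphic maps into hyperbolic targets then furnishes a holomorphic extension $F_j : \Delta \to X$ for each such $j$, and the same estimates force $F_j(0) \to z_0$.

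For the second bullet, hyperbolicity of $X$ gives tautness in the Riemann surface setting of interest, so the family $\{F_j\}$ restricted to $\Delta(0;r)$ is normal for every $r < 1$. Since $F_j(0) \to z_0 \in X$, the sequence is not compactly divergent on a sufficiently small $\Delta(0;r)$, and hence a subsequence $\{F_{j_k}\}$ converges uniformly on compact subsets of $\Delta(0;r)$ to a holomorphic map $F : \Delta(0;r) \to X$.

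The main obstacle is the propagation step: translating the control of $f_j$ on the single circle $\{|z| = |z_j|\}$, which is immediate from the Kobayashi estimate above, into an honest cluster point of $f_j$ at the puncture $0$ inside $X$ for each fixed large $j$. This is the technical heart of \cite[Theorem~1]{JK-1994}, where the exhaustion property of $\{X_j\}$ and the hyperbolicity of $X$ enter in an essential combined way.
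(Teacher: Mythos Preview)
Your proposal follows essentially the same route as the paper: both reduce Proposition~\ref{T4} to \cite[Theorem~1]{JK-1994} after checking that the hypotheses match, and both invoke the distance-decreasing property of the Kobayashi metric under $X_j \hookrightarrow X$ as the key verification. The paper is slightly more precise about what needs to be checked---namely that $z_0$ is a \emph{hyperbolic point} of each $X_j$ in the sense of \cite{JK-1994}---whereas you sketch the mechanism (collapsing circles, Kwack-type extension) and correctly flag the propagation step as the technical heart deferred to the reference.

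One point to tighten: in your second bullet you appeal to tautness of $X$, qualifying this as valid ``in the Riemann surface setting of interest.'' That qualifier matters. Proposition~\ref{T4} is stated for a general Kobayashi hyperbolic complex manifold $X$, where hyperbolicity does not imply tautness, so the normal-family argument as written does not cover the stated generality. What you have sketched for the second bullet is really the paper's proof of the stronger one-dimensional Proposition~\ref{extn}, where the equivalence of hyperbolicity and tautness for Riemann surfaces is available. For Proposition~\ref{T4} in full generality, the subsequential convergence on some $\Delta(0;r)$ must come from the Joseph--Kwack argument itself (local hyperbolicity near $z_0$ providing the needed equicontinuity), not from global tautness of $X$. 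Since the paper only applies Proposition~\ref{T4} in dimension one this is harmless for the applications, but it is worth keeping the two propositions and their proofs separate.
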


The authors in \cite{JK-1994} introduced a notion of a \textit{hyperbolic point}. With $ X_j, X $ as in Proposition \ref{T4}, the distance-decreasing property for the Kobayashi metric under the inclusion map $ X_j \xhookrightarrow{} X $
%\begin{equation*}
%    k_X (z, v) \leq k_{X_j} (z,v) 
%\end{equation*}
together with hyperbolicity of $ X $ (cf. \eqref{E9}) ensures that $ z_0 \in X $ is a \textit{hyperbolic point} of $ X_j $ for each $ j$ - as required in \cite[Theorem 1]{JK-1994}. From this point, an argument similar to the one employed in \cite[Theorem 1]{JK-1994} can be used to complete the proof of Proposition \ref{T4}. We will provide an alternative proof of a stronger result in dimension one, i.e., when $X$ is a Riemann surface, later (see Proposition~\ref{extn}).
%Further, a careful reading of \cite{JK-1994} indicates that Proposition \ref{T4} can be proved by making the relevant changes in the proof of [Theorem 1,  \cite{JK-1994}]. Hence, the proof of Proposition \ref{T4} is omitted. 

\medskip

We now begin with the proof of Theorem \ref{T1}.
 
\subsection{Proof of Theorem \ref{T1}:}

Let $ \psi_j : M_j \rightarrow \Omega $ be biholomorphisms and $ z_0 \in M $ be fixed. Since $ M = \cup \; M_j $, we may assume that $ z_0 \in M_j $ for all $ j$.  The proof bifurcates into two cases:
\begin{enumerate}
 \item[(a)] $ \{\psi_j(z_0) \} $ is a relatively compact subset of $ \Omega $, or
 \item[(b)] $ \{ \psi_j(z_0) \} $ has at least one limit point $ p_0 \in \partial \Omega $.
\end{enumerate}
In case (a), since the domain $\Omega$ is bounded, it follows from \cite[Lemma~1.1]{Fridman} that $ M $ is biholomorphic to $ \Omega $. 

It remains to investigate the case (b). Here, some
subsequence of the `orbit' $ \{\psi_j(z_0)\} $ (which we continue to denote by the same symbols) accumulates at $ p_0 \in \partial \Omega $.

\subsubsection{\textup{\textbf{Proof of Theorem \ref{T1} (i):}}} Since $ \partial \Omega $ is $ C^2$-smooth near the point $ p_0 $, the scaling technique (see \cite{GKK} for more details) applies to yield a sequence $ \Omega_j$ of planar domains, which converge in local Hausdorff sense to $ \Delta $. Indeed, let us scale $ \Omega $ with respect to $ \{ \psi_j(z_0) \} $, i.e. apply the affine maps
\begin{equation*}
T_j(z) = \frac{ z - \psi_j(z_0) }{ -\rho \big(\psi_j(z_0)\big) },
\end{equation*}
where $ \rho $ is a a $ C^2 $-smooth local defining function for $ \Omega $ near $ p_0 $. Writing $ p_j = \psi_j(z_0) $ for brevity, note that a defining function for the domain $ \Omega_j := T_j( \Omega) $ near $ T_j(p_0) $ is given by
\begin{align*}
    \frac{1}{- \rho(p_j)} \rho \circ T_j^{-1} (z) & =  \frac{1}{- \rho(p_j)} \rho \big( p_j - \rho(p_j)z \big) \\
    & =\frac{1}{- \rho(p_j)} \left( \rho(p_j)-2\rho(p_j) \Re \big( \partial \rho(p_j) z\big)+ \big( \rho(p_j) \big)^2 O(1)  \right)\\
   & = -1 + 2 \Re \big( \partial \rho(p_j) z\big) -\rho(p_j) O(1).
\end{align*}
It follows that the domains $ T_j(\Omega) $ converge to the half-plane
\begin{equation*} 
\mathbb{H} = \big\{ z \in \mathbb{C} : \Re \big( \partial \rho(p_0) z\big) < 1/2 \big\},
\end{equation*}
in the local Hausdorff sense. Moreover, the limiting domain $ \mathbb{H} $ is conformally equivalent to $ \Delta $ via a M\"{o}bius transformation. 

Then, it follows that (see, for instance, \cite[Theorem~1.2]{Behrens} or \cite[Theorem 8.1]{BBMV}) $ M $ is biholomorphic to $ \Delta $. This completes the proof of Theorem \ref{T1} (i). 

\subsubsection{\textup{\textbf{Proof of Theorem \ref{T1} (ii):}}} Here, we first establish that $ \{ \psi_j(z_0) \} $ cannot accumulate at any point of $ A $, i.e. $ p_0 \notin A $.

Assume, on the contrary, that $ \psi_j(z_0) \rightarrow p_0 \in A $. To recall, here, $ \psi_j : M_j \rightarrow \Omega $ are biholomorphisms. Then, the inverses $ \phi_j := ( \psi_j)^{-1} : \Omega \rightarrow M_j $ satisfy $ \phi_j \circ \psi_j (z_0 ) = z_0 $ for all $ j $. In this setting, Proposition \ref{T4} implies that each $ \phi_j $ extends across the `puncture' $ p_0 $ to a holomorphic mapping $ {\Phi_j} : \Omega \cup \{p_0\}  \rightarrow M $. 

Let us write $ {\Phi_j} (p_0) = q_j $, and note that $ q_j \notin M_j $. Indeed, if $ q_j \in M_j $, then $ {\Phi_j} $ assumes the value $ q_j $ at $ p_0 $ as well as at $ \psi_j(q_j) \neq p_0 $. But then by the open mapping theorem, $ \phi_j $ must assume values sufficiently close to $ q_j $ at points arbitrarily close to $ p_0 $ as well as $ \psi_j(q_j) $. This is a contradiction since $ \phi_j $ is injective on $ \Omega $.

Furthermore, Proposition \ref{T4} also guarantees the existence of a neighbourhood $ U $ of $ p_0 $, $ U $ is relatively compact in $ D $, $ U \cap A = \{p_0\} $, with the property that some subsequence, say $ \{ \Phi_{j_k} \} $, of $ \{ \Phi_j \} $ converges uniformly on compacts of $ U $ to a holomorphic mapping $ {\Phi} : U \rightarrow M $. In particular, it follows that
\begin{equation*}
{\Phi}(p_0) = \lim_{ k \rightarrow \infty} { \Phi}_{j_k}  \big(  \psi_{j_k} (z_0 ) \big) = \lim_{ k \rightarrow \infty} { \phi}_{j_k}  \big(  \psi_{j_k} (z_0 ) \big) = z_0. 
\end{equation*}

\begin{claim}
$ \{ q_{j_k} \}_{k=1}^{\infty} $ cannot be compactly contained in $ M $.
\end{claim}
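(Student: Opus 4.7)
The plan is to argue by contradiction, leveraging the fact, established just above in the excerpt, that $q_j \notin M_j$ for every $j$. Suppose to the contrary that $\{q_{j_k}\}_{k=1}^{\infty}$ is relatively compact in $M$. After passing to a further subsequence (which for notational convenience we still denote $\{q_{j_k}\}$), there exists a point $q_{\infty} \in M$ such that $q_{j_k} \to q_{\infty}$ in $M$.

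Now I would use the exhaustion $M = \bigcup_{j=1}^{\infty} M_j$. Since the union is increasing and covers $q_{\infty}$, there exists $N \in \mathbb{N}$ with $q_{\infty} \in M_N$. Because $M_N$ is open and $q_{j_k} \to q_{\infty}$, there exists $k_0$ such that $q_{j_k} \in M_N$ for all $k \geq k_0$. Enlarging $k_0$ if necessary so that also $j_k \geq N$ for $k \geq k_0$, the monotonicity $M_N \subset M_{j_k}$ then gives $q_{j_k} \in M_{j_k}$ for all such $k$.

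This directly contradicts the fact, shown in the paragraph preceding the claim via the open mapping theorem and injectivity of $\phi_j$, that $q_j = \Phi_j(p_0) \notin M_j$ for every $j$. Hence $\{q_{j_k}\}$ cannot be relatively compact in $M$.

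There is no real obstacle in this claim itself; it is a short bookkeeping step that converts the non-membership $q_j \notin M_j$ into the assertion that the sequence $\{q_{j_k}\}$ must escape to infinity in $M$ (in the sense of leaving every compact subset). The real work will come afterwards, when this escape is used to derive the ultimate contradiction with $p_0 \in A$, presumably by combining it with the convergence $\Phi_{j_k} \to \Phi$ on $U$ and the taut/hyperbolic structure of $M$.
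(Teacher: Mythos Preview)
Your proof is correct and follows essentially the same approach as the paper: both argue by contradiction, use the exhaustion $M=\bigcup M_j$ together with monotonicity to place (eventually) $q_{j_k}$ inside some fixed $M_{l_0}$ (the paper does this directly via a finite subcover of the compact closure, you do it via a convergent subsequence), and then derive a contradiction from $q_{j_k}\notin M_{j_k}$ and $j_k\to\infty$. The only cosmetic difference is that the paper avoids passing to a subsequence by immediately covering the compact set $\overline{\{q_{j_k}\}}$ by a single $M_{l_0}$.
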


\begin{proof}[Proof of the claim] We establish the above claim by contradiction. Suppose that $ \{ q_{j_k} \}_{k=1}^{\infty} $ is compactly contained in $ M $. Then, since $ M = \cup \; M_j $, there must be an $ l_0 \in \mathbb{N} $ such that $ q_{j_k} \subset M_{l_0} $ for all $ k \in \mathbb{N} $. Moreover, since $ q^{j_k } \notin M_{j_k}  $ and $ M_j \subset M_{ j+1 } $, it is immediate that 
\[ l_0 > j_k \geq k \]
for all $ k \in \mathbb{N} $, which is evidently a contradiction. Hence the claim follows. \phantom\qedhere \hfill $ \blacktriangleleft $
\end{proof}
On the other hand, notice that 
\[
q_{j_k} = {\Phi_{j_k}} (p_0) \rightarrow {\Phi} (p_0) = z_0 \in M. 
\] 
Clearly, the above observation contradicts the claim. Hence, $ \{ \psi_j(z_0) \} $ cannot cluster at any point of $ A $.

\medskip

Following Proposition 2.2 and Corollary 2.3 of  \cite[Chapter 6]{Kob}, under the hypothesis of Proposition~\ref{T4}, it is possible to draw a stronger conclusion in dimension one---the map $f_j:\Delta^{*} \to X_j$ extends to a map $F_j : \Delta \to X_j$ for all $j$ large---based on which we now provide an alternative argument that the orbit cannot accumulate at a puncture. First, we prove this stronger version of the extension result.

\begin{prop}\label{extn}
Let $ X_j $ be an increasing sequence of open subsets of a hyperbolic Riemann surface $ X $ that exhausts $ X $, i.e., $ X = \cup_{j=1}^{\infty} X_j $. If $ f_j : \Delta^* \rightarrow X_j $ is a sequence of holomorphic mappings and $ \{ z_j \} $ is a sequence in $ \Delta^* $ such that $ z_j \rightarrow 0 $ and $ f_j(z_j) \rightarrow z_0 \in X $, then the following two conditions hold:

\begin{itemize}
    \item [(i)] Eventually each $ f_j $ extends to a holomorphic mapping $ {F_j} : \Delta \rightarrow X_j $, and
    \item [(ii)] there is a subsequence $ \{ F_{j_k} \} $ of $ \{ F_j \} $ such that $ \{ F_{j_k} \} $ converges uniformly on compacts of $ \Delta$ to a holomorphic mapping $ F : \Delta \rightarrow X $. 
\end{itemize}
\end{prop}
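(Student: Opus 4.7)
The plan is to upgrade Proposition~\ref{T4} in two independent ways: first, refining the range of the extensions from $X$ to $X_j$; second, enlarging the domain of uniform convergence from $\Delta(0;r)$ to all of $\Delta$. The first step relies on the exhaustion $\cup_{j=1}^{\infty} X_j = X$ together with the openness of each $X_j$; the second uses the tautness of the Riemann surface $X$.

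For (i), apply Proposition~\ref{T4} to obtain, for $j$ sufficiently large, holomorphic extensions $F_j : \Delta \to X$ of $f_j$, together with a subsequence $\{F_{j_k}\}$ converging uniformly on compact subsets of some disc $\Delta(0;r)$ to a holomorphic map $F : \Delta(0;r) \to X$ with $F(0) = z_0$. The key claim is that $F_j(0) \to z_0$ along the \emph{full} sequence. To see this, take any subsequence of $\{F_j\}$; the corresponding sub-data still satisfies the hypothesis of Proposition~\ref{T4}, so a further subsequence converges on some disc about $0$ to a holomorphic map whose value at $0$ equals $z_0$. Thus every subsequence of $\{F_j(0)\}$ has a further subsequence converging to $z_0$, which forces $F_j(0) \to z_0$. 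Since $z_0 \in X = \cup_j X_j$, there is $j_1$ with $z_0 \in X_{j_1}$; by openness of $X_{j_1}$ and the above convergence, $F_j(0) \in X_{j_1} \subset X_j$ for all $j$ sufficiently large. Combined with $F_j(\Delta^*) = f_j(\Delta^*) \subset X_j$, this gives $F_j(\Delta) \subset X_j$, yielding (i).

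For (ii), appeal to tautness: since $X$ is a hyperbolic Riemann surface, $\mathcal{O}(\Delta, X)$ is a normal family. The subsequence $\{F_{j_k}\}$ produced by Proposition~\ref{T4} therefore admits a further subsequence $\{F_{j_{k_l}}\}$ that is either uniformly convergent on compact subsets of $\Delta$ or compactly divergent. The second alternative is ruled out because $F_{j_k}(0) \to z_0 \in X$. Hence $\{F_{j_{k_l}}\}$ converges uniformly on compacts of $\Delta$ to a holomorphic map $\widetilde{F} : \Delta \to X$, which by the identity theorem agrees with $F$ on $\Delta(0;r)$ and so provides the extension required in~(ii).

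The main obstacle lies in the subsequence argument establishing $F_j(0) \to z_0$ along the full sequence; this upgrade from ``cluster value along a subsequence'' (as supplied by Proposition~\ref{T4}) to ``limit along the full sequence'' is precisely what allows one to keep the extended maps inside $X_j$ rather than merely inside $X$, and thereby distinguishes Proposition~\ref{extn} from Proposition~\ref{T4}.
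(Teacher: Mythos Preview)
Your argument is correct, and part~(ii) coincides with the paper's proof (both invoke tautness of $X$ and rule out compact divergence via $F_j(z_j)\to z_0$). Part~(i), however, takes a genuinely different route. The paper argues directly: it shows, using Lemma~2 of \cite{JK-1994}, that the images $f_j(\sigma_j)$ of the circles $\sigma_j=\{|z|=|z_j|\}$ shrink to $z_0$, whence for large $j$ the loop $f_j(\sigma_j)$ lies in a simply connected neighbourhood of $z_0$ already contained in $X_j$ and is therefore nullhomotopic there. Since $X_j$ is hyperbolic with universal cover $\Delta$, the lifting criterion gives $\tilde f_j:\Delta^*\to\Delta$, which extends across $0$ by Riemann's removable singularity theorem, and pushing down via the covering map $\pi_j$ lands the extension in $X_j$ from the outset. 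Your approach instead treats Proposition~\ref{T4} as a black box, upgrades its subsequential conclusion to full-sequence convergence $F_j(0)\to z_0$ via the subsequence-of-subsequence criterion, and then uses the exhaustion to absorb the single extra value $F_j(0)$ into $X_j$. Your method is more economical and avoids the covering-space machinery; the paper's method is self-contained (not relying on \cite{JK-1994}) and explains structurally why dimension one permits the sharpened conclusion. One small point worth making explicit in your write-up: the identity $F(0)=z_0$ follows because uniform convergence on compacta of $\Delta(0;r)$ together with $z_{j_k}\to 0$ gives $F_{j_k}(z_{j_k})\to F(0)$, while $F_{j_k}(z_{j_k})=f_{j_k}(z_{j_k})\to z_0$.
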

\begin{proof}[Proof of Proposition]
Consider the circles
\[
\sigma_j = \big\{ z \in \Delta^{*} : |z | = |z_j | \big\}.
\]
Following the arguments in the proof of Proposition 2.2 of \cite[Chapter~6]{Kob}, we show that for all $j$ large, $ f_j $ maps the circle $ \sigma_j $ into a closed curve which is homotopic to zero in $M_j$. Indeed, first recall that $ f_j (z_j) \rightarrow z_0 $ for all $ j $. So Lemma~2 of \cite{JK-1994} applies, and therefore,
\begin{equation} \label{E1}
f_j ( \sigma_j) \rightarrow z_0,
\end{equation}
as $ j \rightarrow \infty $. Now, if $ V $ is any relatively compact simply connected neighboourhood of $ z_0 $ in $ M $, then for all $j$ large, $ V \subset M_j $. Also, from (\ref{E1}), for all $j$ large, we have $ f_j ( \sigma_j)  \subset V $. Since $ V $ is simply connected, it follows that for all $j$ large, the closed curve $f_j ( \sigma_j) $ is nullhomotopic in $ M_j $. Since each $\sigma_j$ is a generator of $\pi_1(\De^{*})$, we conclude that, for all $j$ large, $ (f_j)_* \big( \pi_1 ( \Delta^{*})\big)$ is trivial at the level of fundamental groups. 

Next, observe that each $X_j $, being a subset of a hyperbolic Riemann surface $X$, is itself hyperbolic, and hence $ \Delta $ is the universal cover for $ X_j $. Let $ \pi_j: \Delta \rightarrow X_j $ be a covering projection from $ \Delta $ onto $ X_j $. It should be noted that the above observation is exactly the assertion
\[
(f_j)_* \big( \pi_1 ( \Delta^{*}) \big) \subseteq (\pi_j)_* \big( \pi_1 ( \Delta) \big),
\]
for all $ j $ large. As a consequence, $f_j $ lifts to $  \ti f_j : \Delta^{*}  \rightarrow \Delta $, i.e.
\begin{equation} \label{E2}
f_j = \pi_j \circ \ti f_j,
\end{equation}
holds for all $ j $ large. Moreover, since $ \ti f_j $ is bounded, $ \ti f_j $ extends across the `puncture' $ 0 $ to a holomorphic mapping $\ti F_j: \Delta \to \overline{\Delta} $. We note that $ \ti F_j (\Delta) \subset \Delta $. Indeed, otherwise, by the maximum principle, $\ti F_j$ is identically equal to a constant $\zeta \in \pa \De$, which in turn implies that $\ti f$ is identically equal to $\zeta$, which contradicts that $\ti f_j$ maps $\De^{*}$ into $\De$. Thus $ \ti F_j : \Delta \rightarrow {\Delta} $ is confirmed. It follows that $ F_j:=\pi_j \circ \ti F_j $ is the required extension of $f_j$ and the proof of (i) is complete.

For (ii), recall from Subsection~\ref{S2-0} that $X$ is taut. Since $F_j: \Delta \rightarrow X_j \subset X $, it follows from the tautness of $ X $ that there is a subsequence $\{F_{j_k}\}$ of $\{F_j\}$ that is either compactly convergent to a map $F: \Delta \to X$ or is compactly divergent. However the latter condition is ruled out by the fact that $F_j(z_j) \rightarrow z_0 \in X$, and hence (ii) is proved. \phantom\qedhere \hfill $ \blacktriangleleft $
\end{proof}

We will now exhibit an alternative argument that the orbit cannot accumulate at a puncture. Suppose that $ \psi_j(z_0) \rightarrow p_0 \in A $. Fix a tiny disc $ U $ around the point $ p_0 $ in $ D $ chosen so that $ U \cap A = \{p_0\} $. Recall that $ \phi_j \big( \psi_j (z_0 ) \big) = z_0 $ for all $ j $. So applying Proposition~\ref{extn} to $\phi_j\vert_{U \setminus \{p_0\}} $, it follows that $ \phi_j $ can be extended to a holomorphic mapping $ {\Phi_j} : \Omega \cup \{p_0\}  \rightarrow M_j $. 

To conclude, write $ {\Phi_j} (p_0) = q_j $, and note that $ q_j \in M_j $ by construction. But then, $ {\Phi_j} $ assumes the value $ q_j $ at $ p_0 $ as well as at $ \psi_j(q_j) \neq p_0 $. This, in turn, implies that $ \phi_j $ must assume values sufficiently close to $ q_j $ at points arbitrarily close to $ p_0 $ as well as $ \psi_j(q_j) $. This is a contradiction since $ \phi_j $ is injective on $ \Omega $. Hence, the above provides an alternate argument for the claim that $ \{ \psi_j(z_0) \} $ cannot acummulate at any point of $ A$. 

\medskip

To continue with the proof of the theorem: It is evident from the above discussion that we only need to consider the case when $ \{ \psi_j(z_0) \} $ accumulates at $ p_0 \in \partial D $. Since $ \partial D $ is $ C^2$-smooth near the point $ p_0 $, the proof proceeds exactly as in Theorem \ref{T1} (i) using the scaling method to yield that $ M $ is biholomorphic to $ \Delta $. 
This completes the proof of the theorem. \qed

%Under the hypothesis above, following \cite[Main Theorem]{FS1981} or \cite[Theorem 1.2]{BBMV}, it can be shown that $ M $ is biholomorphic to a locally trivial holomorphic fibre bundle with fibre $ \mathbb{C} $ over a retract of $ \Omega $ or that of a limiting domain $ \Delta $. Recall that  holomorphic retracts of a planar domain are either points or the whole domain itself.  

\section{When $\Omega$ has infinite connectivity} \label{S6}

Throughout this section, $ M $ is a hyperbolic one-dimensional complex manifold.
As mentioned in the introduction, we do not know if, in the union problem, it is possible to describe $M$ explicitly when $\Omega$ is infinitely connected. We examine the problem for two specific infinitely connected domains $\Omega$ to see some possible scenarios. 

\subsection{When $ \Omega = \Delta \setminus A $, where $ A \subset \Delta $ is discrete and infinite:} \label{SS1} Evidently, $ A $ is countable. Let $ \{a_n: n = 1, 2, \ldots \} $ be an enumeration of $ A $. Since $ A $ is infinite and $ \Delta $ is bounded, $ A $ will have accumulation points on
$ \partial \Delta $ and such points will be non-smooth boundary points on
$ \partial \Omega $. 

Keeping the notation from Section \ref{S4}, recall the biholomorphisms $ \psi_j : M_j \rightarrow \Omega$. As before, for $ z_0 \in M $
fixed, we need to examine two distinguished scenarios:
\begin{enumerate}
 \item[(a)] $ \{\psi_j(z_0) \} $ is a relatively compact subset of $ \Omega $, or
 \item[(b)] $ \{ \psi_j(z_0) \} $ has at least one limit point $ p_0 \in \partial \Omega = \partial \Delta \cup A$.
\end{enumerate}
Much like before, in case (a), $ M $ turns out to be biholomorphic to $ \Omega $. Thus, we now focus on case (b).

In case (b), exactly as in the proof of Theorem \ref{T1}(ii), it can be shown that  $ \{ \psi_j(z_0) \} $ cannot cluster at any point of $ A $. As a consequence, $ \{ \psi_j(z_0) \} $ can only accumulate on points of $ \partial \Omega $ which lie on the unit circle. Now, if $ \{ \psi_j(z_0) \} $ accumulates at a smooth point of $ \partial \Omega $, then the scaling method - same as that employed in the proof of Theorem \ref{T1} - yields that $ M $ is biholomorphic to $ \Delta $. 

It remains to investigate the case when $ \{ \psi_j(z_0) \} $ accumulates at a non-smooth boundary point of $ \Omega $, say, $ p_0 \in  \partial \Delta $. This is precisely the case when a subsequence of punctures also converges to $p_0$. We do not have a definite answer in this case. But we consider a special case to see what $M$ may look like. Without loss of generality, we assume that $a_n \to p_0$ and $p_0=1$. Assume further that and $ 0 \leq \psi_j(z_0) < 1 $ for all $j$.

Let us scale $ \Delta $ with respect to $ \{ \psi_j(z_0) \} $, i.e. apply the affine maps
\begin{equation} \label{E6}
T_j(z) = \frac{ z - \psi_j(z_0) }{ -\rho \big(\psi_j(z_0)\big) },
\end{equation}
where $ \rho(z) = |z|^2 - 1$ is a defining function for $ \Delta $. Then, as in the proof of Theorem \ref{T1}(i), 
%note that a defining function for the domain $ T_j( \Delta) $ is given by
%\begin{align*}
%    \frac{1}{- \rho(p_j)} \rho \circ T_j^{-1} (z) & =  \frac{1}{- %\rho(p_j)}\left(\Big\vert p_j-\rho(p_j)z\Big\vert^2-1\right)\\
    %& =\frac{1}{- \rho(p_j)} \left((p_j)^2-2\rho(p_j)p_j \Re z+\rho(p_j)^2\vert z\vert^2 -1\right)\\
 %  & =\frac{1}{- \rho(p_j)}\left(\rho(p_j)-2\rho(p_j)p_j \Re z+ \big(\rho(p_j) \big)^2\vert z\vert^2\right)\\
%   & = -1 + 2p_j \Re z -\rho(p_j) \vert z \vert^2.
%\end{align*}
%It follows that 
it can be seen that the domains $ T_j( \Delta) $ converge to the half-plane
\begin{equation} \label{E7}
\mathbb{H} = \{ z \in \mathbb{C} : \Re z < 1/2 \},
\end{equation}
in the Hausdorff sense. Write $ p_j = \psi_j(z_0) $ for brevity and set
\[
a_{nj} : = T_j(a_n) = \frac{a_n - p_j}{1- p_j^2}. 
\]
The goal is to understand the \textit{limit} of $ T_j(\Omega) = T_j(\Delta) \setminus \{ a_{nj}: n \in \mathbb{N} \} $, if it exists. To this end, first observe that
\begin{itemize}
    \item For each fixed $ n \in \mathbb{N} $, $ a_{nj} \rightarrow \infty $ as $ j \rightarrow \infty $.
    
    \item For each fixed $ j \in \mathbb{N} $, $ a_{nj} \rightarrow \frac{1}{1+p_j} \in \mathbb{C} \setminus \overline{\mathbb{H}} $ as $ n \rightarrow \infty $.
    
    \item The diagonal sequence
    \[
    a_{jj} = \frac{a_j - p_j}{1 - p_j^2}
    \]
    may accumulate inside or outside of $ \mathbb{H} $ depending on the asymptotic behaviour of $ |a_j - p_j| $. For instance, if $ |a_j - p_j | \lesssim \big( 1-p_j^2 \big)^2 $, then $ a_{jj} \rightarrow 0 $. On the other hand, if $ |a_j - p_j| \gtrsim \sqrt{1-p_j} $, then 
    $ a_{jj} \rightarrow \infty $. It is also possible that the diagonal sequence $a_{jj}$ is constant. Indeed, if 
    \[
    a_n=p_n+\frac{1}{4}(1-p_n^2),
    \]
    then $a_{jj}=1/4 \in \mathbb{H}$ for all $j$.
\end{itemize}
In particular, the set $ {A}^* := \{ a_{nj}: n, j \in \mathbb{N} \} $ may have limit points in $ \mathbb{H} $.

\smallskip

\noindent \textbf{Claim:} $ T_j( \Omega) \rightarrow \mathbb{H} \setminus \overline{A^{*}} $ in the Hausdorff sense.

\smallskip 

\noindent \textit{Proof of claim:} Let $ K $ be a compact subset of $ \mathbb{H} \setminus \overline{A^{*}} $.  Since $ T_j ( \Delta) \to \mathbb{H} $, it is immediate that $ K $ is compactly contained in $ T_j( \Delta ) $ for all $ j $ large. Without loss of generality, assume that $ K \subset T_j( \Delta) $ for all $ j $. We need to show that $ K \subset T_j( \Omega ) $ for all $ j $ large. If possible, assume that this is false. Then, there are subsequences $(j_k)_{k=1}^{\infty}$ and $(n_k)_{k=1}^{\infty}$ such that $a_{n_kj_k} \in K$. Since $K$ is compact, passing to a subsequence, $a_{n_kj_k} \to a \in K$. Evidently, $a \in \overline{A^{*}}$. This contradicts that $K \cap \overline{A^{*}}=\emptyset$.

Now, let $ K $ be uniformly compactly contained in $ T_j( \Omega ) $ for all $ j $ large. Without loss of generality, assume that this is the case for all $ j $. This means that there exists $ \delta > 0 $ such that for each $j$,
\begin{equation} \label{E3}
\mbox{dist} \big( K, \mathbb{C} \setminus T_j(\Omega) \big) \geq \delta. 
\end{equation}
The goal is to establish that $ K \subset \mathbb{H} \setminus \overline{A^*}$. Towards this goal, we note that \eqref{E3} has the following two consequences. First, $K$ is uniformly compactly contained in $T_j(\Delta)$ for all $j$ and hence  $ K \subset \mathbb{H} $ as $ T_j( \Delta) \to \mathbb{H} $. Second, for each $j$,
\[
\mbox{dist} \big( K, \{ a_{nj}: n \in \mathbb{N} \} \big) \geq \delta,
\]
which implies that $\text{dist}(K, A^{*})\geq \de$. Therefore, $\text{dist}(K, \overline{A^{*}})\geq \de$ and so $ K \cap \overline{A^{*}} = \emptyset $. Thus $K \subset \mathbb{H} \setminus \overline{A^{*}}$ as required. Hence, the claim is true. \hfill $ \blacktriangleleft $

Finally, using the arguments presented in the proof of Theorem 1.1 of \cite{BBMV}, it can be shown that $ M $ is biholomorphic to $ \mathbb{H} \setminus \overline{A^{*}} $ in this case. 
%%%%%%%%%%%%%%%%%%%%%%%%%%%%%%%%%%%%%%
\subsection{When $ \Omega = \Delta \setminus A $, where $ A \subset \Delta $ is a countable union of pairwise disjoint closed discs:} Let 
\[
A= \cup A_n = \cup_{n=1}^{\infty} \overline{\Delta(a_n;r_n)}
\]
be a union of pairwise disjoint closed discs $A_n=\overline{\Delta(a_n;r_n)}$ in $\Delta$ where $a_n \to 1$ and $r_n \downarrow 0$. It is evident that $ \partial \Omega $ is non-smooth near the point $ 1 $.

Again, following the notation from Section \ref{S4}, $ \psi_j : M_j \rightarrow \Omega$ are biholomorphisms and $ z_0 \in M $ is fixed. Two cases arise:
\begin{enumerate}
 \item[(a)] $ \{\psi_j(z_0) \} $ is a relatively compact subset of $ \Omega $, or
 \item[(b)] $ \{ \psi_j(z_0) \} $ has at least one limit point $ p_0 \in \partial \Omega = \partial \Delta \cup \left(\cup_{n=1}^{\infty} \partial A_n \right) $.
\end{enumerate}
As before, in case (a), $ M $ turns out to be biholomorphic to $ \Omega $. Thus, we are left with case (b).

In case (b), if $ \{ \psi_j(z_0) \} $ accumulates at a smooth point of $ \partial \Omega $, then the scaling technique yields that $ M $ is biholomorphic to $ \Delta $. 

Hence, we only need to address the case when $ \{ \psi_j(z_0) \} $ accumulates at the non-smooth boundary point $ 1 \in \partial \Omega $. For simplicity, assume that $ 0 \leq \psi_j(z_0) < 1 $ for all $j$. 

Scaling $ \Delta $ along the sequence $ \{ \psi_j(z_0) \} $ gives rise to a sequence of the scaled domains $ T_j ( \Delta) $, with $ T_j $ as defined by \eqref{E6}, 
that converge to the half-plane $ \mathbb{H} = \{ z \in \mathbb{C} : \Re z < 1/2 \}$, exactly as discussed in Section \ref{S4}. Setting
\[
A_{nj} : = T_j(A_n)= \left\{\frac{z - p_j}{1- p_j^2} :z \in \overline{\Delta(a_n;r_n)} \right\}, 
\]
we examine the \textit{limit} of $ T_j(\Omega) = T_j(\Delta) \setminus \cup_{n=1}^{\infty} A_{nj}$, provided it exists. Let $ {A}^* := \cup_{n,j}A_{nj} $.

\begin{claim}
$ T_j( \Omega) \rightarrow \mathbb{H} \setminus \overline{A^{*}}$ in the local Hausdorff sense.
\end{claim}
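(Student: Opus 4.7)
The plan is to follow the template used for the analogous claim in Subsection \ref{SS1}, replacing the single-point punctures $a_{nj}$ by the closed discs $A_{nj} = T_j(A_n)$. I will verify the two defining conditions of local Hausdorff convergence: (i) every compact $K \subset \mathbb{H} \setminus \overline{A^{*}}$ lies in $T_j(\Omega)$ for all $j$ large, and (ii) if a compact $K$ is uniformly compactly contained in $T_j(\Omega)$ for all sufficiently large $j$, then $K \subset \mathbb{H} \setminus \overline{A^{*}}$. Since the scaled discs $T_j(\Delta)$ already converge to $\mathbb{H}$ (as computed in Section \ref{S4}), the new content lies entirely in understanding how the removed discs $A_{nj}$ interact with $K$.

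For (i), I first use $T_j(\Delta) \to \mathbb{H}$ to place $K$ inside $T_j(\Delta)$ for all $j$ large, and then argue by contradiction that $K \cap A_{nj} = \emptyset$ for such $j$ and all $n$. If this failed, one would extract subsequences $j_k \to \infty$, indices $n_k$, and points $z_k \in K \cap A_{n_k j_k}$; by compactness of $K$, a subsequential limit $z \in K$ exists, and since each $z_k$ lies in $A_{n_k j_k} \subset A^{*}$, the limit $z$ belongs to $\overline{A^{*}}$, contradicting $K \cap \overline{A^{*}} = \emptyset$. For (ii), the hypothesis furnishes a $\delta > 0$ with $\text{dist}(K, \mathbb{C} \setminus T_j(\Omega)) \geq \delta$ for all $j$ large. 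This immediately gives $K$ uniformly compactly contained in $T_j(\Delta)$, so $K \subset \mathbb{H}$; it also gives $\text{dist}(K, A_{nj}) \geq \delta$ uniformly in $n$ and $j$, since each closed disc $A_{nj}$ lies in $\mathbb{C} \setminus T_j(\Omega)$. Hence $\text{dist}(K, A^{*}) \geq \delta$, and therefore $\text{dist}(K, \overline{A^{*}}) \geq \delta$, which is exactly $K \subset \mathbb{H} \setminus \overline{A^{*}}$.

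The only feature genuinely distinguishing this claim from the one in Subsection \ref{SS1} is that each $A_{nj}$ is now a closed disc of positive radius $r_n/(1 - p_j^2)$, so $A^{*}$ is a truly two-dimensional set and $\overline{A^{*}}$ may absorb substantial portions of $\mathbb{H}$ (for instance via fixed-$n$ sequences of discs whose centres escape to infinity while their radii blow up). This does not obstruct the argument, however, because both directions rely solely on compactness and distance estimates, which are insensitive to whether $A^{*}$ is discrete or has interior. The only step requiring a brief sanity check is the passage from $z_k \in A^{*}$ to $z \in \overline{A^{*}}$ in (i), which is immediate from the definition of closure. Accordingly, I do not anticipate any serious obstacle; the proof is essentially a verbatim adaptation of the earlier claim, with the word ``point'' replaced by ``closed disc'' throughout.
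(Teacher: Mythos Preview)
Your proposal is correct and follows essentially the same approach as the paper's own proof: both verify the two conditions of local Hausdorff convergence by (i) a contradiction argument extracting points $z_k \in K \cap A_{n_k j_k}$ with a subsequential limit in $K \cap \overline{A^{*}}$, and (ii) a uniform distance estimate $\text{dist}(K, \mathbb{C} \setminus T_j(\Omega)) \geq \delta$ that separates $K$ from both $\partial T_j(\Delta)$ and every $A_{nj}$. Your additional remarks about the two-dimensionality of $A^{*}$ are correct commentary but not needed for the argument, which, as you note, is insensitive to that feature.
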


\begin{proof}[Proof of the claim]
Let $ K $ be a compact subset of $ \mathbb{H} \setminus \overline{A^{*}} $. Since $ T_j ( \Delta) $ converge to $ \mathbb{H} $, and $ K \subset \mathbb{H} $, it is immediate that $ K $ is compactly contained in $ T_j( \Delta ) $ for all $ j $ large. Therefore, in order to show that $ K $ is compactly contained in $ T_j( \Omega ) $ for all $ j $ large, it suffices to show that 
\begin{equation} \label{E4}
    K \cap \big(\cup _{n=1}^{\infty}  A_{nj} \big) = \emptyset 
\end{equation}
for all $ j $ large. We establish the above by contradiction. If (\ref{E4}) is false, then, there exist subsequences $(j_k)_{k=1}^{\infty}$ and $(n_k)_{k=1}^{\infty}$ such that for each $k$, $ K \cap A_{n_kj_k} \neq\emptyset$. It follows that there are points $a_{n_kj_k} \in K \cap A_{n_kj_k}$ for each $ k $. Since $K$ is compact, passing to a subsequence, $a_{n_kj_k} \to a \in K$. Evidently, $a \in \overline{A^{*}}$. This contradicts that $K \cap \overline{A^{*}}=\emptyset$.

Now, let $ K $ be uniformly compactly contained in $ T_j( \Omega ) $ for all $ j $ large. Without loss of generality, assume that this is the case for all $ j $. This means that there exists $ \delta > 0 $ such that for each $j$,
\begin{equation} \label{E5}
\mbox{dist} \big( K, \mathbb{C} \setminus T_j(\Omega) \big) \geq \delta. 
\end{equation}
The goal is to establish that $ K \subset \mathbb{H} \setminus \overline{A^*}$. To achieve this, note that \eqref{E5} has the following two consequences. First, $K$ is uniformly compactly contained in $T_j(\Delta)$ for all $j$ and hence  $ K \subset \mathbb{H} $ as $ T_j( \Delta) \to \mathbb{H} $. Second, for each $j$,
\[
\mbox{dist} \big( K, \cup_{n=1}^{\infty}A_{nj} \big) \geq \delta,
\]
which implies that $ \text{dist}(K, A^*) \geq \delta $. Therefore, 
$\text{dist}(K, \overline{A^*}) \geq \delta $ and so $ K \cap \overline{A^*} = \emptyset $. Thus $K \subset \mathbb{H} \setminus \overline{A^*}$ as required. Hence, the claim follows. 
\phantom\qedhere\hfill $ \blacktriangleleft $
\end{proof}

To conclude, note that as in Subsection \ref{SS1}, it follows that in this case, $ M $ is biholomorphic to $ \mathbb{H} \setminus \overline{A^{*}} $.

To summarise, when $\Omega$ in the union problem is infinitely connected, there does not seem to be any canonical description of $M$. 

%%%%%%%%%%%%%%%%%%%%%%%%%%%%%%%%%%%%%%%

\section{Some explicit examples} 

In this section, we present the proof of Theorem \ref{T5}. We begin with the following observation:

\begin{cor} \label{T3} 
Assume that in the union problem, $ \Omega \subset \mathbb{C} $ is such that $ \mathbb{C}_{\infty} \setminus \Omega $ has finitely many connected components and none of these components are singletons. Then $ M $ is biholomorphic to one of $ \Omega $, $ \Delta $ or $ \mathbb{C} $.
\end{cor}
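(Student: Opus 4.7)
The plan is to reduce the corollary to Theorem~\ref{T1}(i) by replacing $\Omega$ with a conformally equivalent model that has $C^2$-smooth boundary. First, since $\Omega \subset \mathbb{C}$, the point $\infty$ lies in some component of $\mathbb{C}_{\infty} \setminus \Omega$, and by hypothesis that component is not $\{\infty\}$, so it is a non-degenerate continuum containing $\infty$. It follows that $\Omega$ is bounded away from $\infty$, hence bounded.

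Next, I would invoke Koebe's classical circle domain theorem for finitely connected planar domains: any finitely connected domain in $\mathbb{C}_{\infty}$ whose complementary components are all non-degenerate is conformally equivalent to a bounded domain $\Omega' \subset \mathbb{C}$ whose boundary is a finite disjoint union of round circles. In the simply connected case this is just the Riemann mapping theorem (so that $\Omega' = \Delta$), while in the multiply connected case it is Koebe's classical result. Such an $\Omega'$ is bounded with real-analytic, hence $C^2$-smooth, boundary. Composing the given biholomorphisms $\psi_j : M_j \to \Omega$ with a conformal equivalence $\Omega \to \Omega'$ shows that $M$ is exhausted by biholomorphs of $\Omega'$, and so we are in the situation of Theorem~\ref{T1}(i) with $\Omega$ replaced by $\Omega'$.

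Finally, I would split on whether $M$ is hyperbolic. If $M$ is hyperbolic, Theorem~\ref{T1}(i) applied to the pair $(M, \Omega')$ yields $M \cong \Omega' \cong \Omega$ or $M \cong \Delta$. If $M$ is not hyperbolic, the analysis at the end of Subsection~\ref{S2} applies to $(M, \Omega')$ and forces $M \cong \mathbb{C}$, the alternative $M \cong \mathbb{C}^{*}$ being ruled out by the fact that $\Omega'$ is bounded with $C^2$-smooth boundary. Combining the two cases yields the trichotomy $M \cong \Omega$, $\Delta$, or $\mathbb{C}$. The only external input beyond the paper itself is Koebe's circle domain theorem, which is classical in the finitely connected case, so I do not anticipate any serious obstacle; the only point to verify carefully is that the substitution $\Omega \to \Omega'$ genuinely preserves the setup of the union problem, which is immediate from composition with a single conformal equivalence.
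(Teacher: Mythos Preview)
Your approach is essentially the paper's own: replace $\Omega$ by a conformally equivalent bounded domain with smooth boundary (the paper cites Ahlfors, you invoke Koebe's circle domain theorem, which is equally valid and yields real-analytic boundary), then apply Theorem~\ref{T1}(i) in the hyperbolic case and the discussion at the end of Subsection~\ref{S2} in the non-hyperbolic case. The substitution $\Omega \to \Omega'$ preserves the union problem exactly as you say.

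One correction, however: your first paragraph is wrong. The fact that the component of $\mathbb{C}_\infty \setminus \Omega$ containing $\infty$ is non-degenerate does \emph{not} force $\Omega$ to be bounded. For instance, take $\Omega = \mathbb{C} \setminus [0,\infty)$; then $\mathbb{C}_\infty \setminus \Omega = [0,\infty]$ is a single non-degenerate arc, yet $\Omega$ is unbounded. The paper in fact remarks that ``$\Omega$ can \emph{a priori} be unbounded.'' Fortunately this error is harmless: nothing in your subsequent argument uses boundedness of $\Omega$, since Koebe's theorem applies to any finitely connected domain in $\mathbb{C}_\infty$ with non-degenerate complementary components. Simply delete the first paragraph and the proof stands.
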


Note that $ \Omega $ can \textit{a priori} be unbounded in Corollary \ref{T3}. It can be shown that the domains considered in Corollary \ref{T3} are biholomorphic to bounded domains with $ C^{\infty}$-smooth boundary (see, for instance, \cite{Ahlfors}). Hence, Corollary \ref{T3} is an immediate consequence of Theorem \ref{T1}(i) and remarks made in Subsection~\ref{S2}.

\begin{proof}[Proof of Theorem~\ref{T5}:] (i) If $M$ is hyperbolic, then by Theorem~\ref{T1}, $M$ is biholomorphic to $\Delta$. If $M$ is non-hyperbolic, then as observed in Subsection~\ref{S2}, $ M $ is biholomorphic either to $ \mathbb{C} $ or to $\mathbb{C}^{*}$. However, since $\Omega$ is simply connected so is $M$, and so $M$ must be biholomorphic to $\mathbb{C}$.

(ii) This is an immediate consequence of Corollary~\ref{T3}.

(iii) Since $\Omega$ is simply connected and non-hyperbolic, so is $M$, as observed in Subsection \ref{S2}. It follows that $M$ is biholomorphic to $\mathbb{C}$.

(iv) First, consider the case $n=1$. Without loss of generality, we may assume that $\Om=\mathbb{C}^{*}$. Then $\Omega$ is non-hyperbolic and hence $M$ is also non-hyperbolic, as observed earlier. Therefore, $M$ is biholomorphic either to $\mathbb{C}$ or to $\mathbb{C}^{*}$. By identifying $M$ with its image under this biholomorphism, we may assume that $M \subset\mathbb{C}$. Then $M_1\subset \mathbb{C}$ which is biholomorphic to $\mathbb{C}^{*}$, and hence $M_1=\mathbb{C} \setminus \{p\}$ for some $p \in \mathbb{C}$. Now, for each 
$j$, $M_1=\mathbb{C} \setminus \{p\} \subset M_j \subset \mathbb{C}$, and $M_j$ is biholomorphic to $\mathbb{C}^{*}$, and hence $M_j=\mathbb{C} \setminus \{p\}$. This implies that $M=\mathbb{C} \setminus \{p\}$, and thus $M$ is biholomorphic to $\mathbb{C}^{*}$.

Now consider the case $n \geq 2$. Then $\Omega$ is hyperbolic and hence taut as observed in Subsection~\ref{S2-0}.

\begin{claim}
$M$ is hyperbolic.
\end{claim}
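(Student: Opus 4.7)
My plan is to argue by contradiction. If $M$ were not hyperbolic, the observations in Subsection~\ref{S2} would force $M$ to be biholomorphic to either $\mathbb{C}$ or $\mathbb{C}^{*}$. I fix such an identification, so that each $M_j$ is an open subset of $\mathbb{C}_\infty$ sitting inside $\mathbb{C}$ (respectively inside $\mathbb{C}^{*}$), and the inverse biholomorphism $\phi_j := \psi_j^{-1} : \Omega \to M_j$ is an injective holomorphic map from $\Omega = \mathbb{C} \setminus \{a_1, \ldots, a_n\}$ into $\mathbb{C}$.

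The key step is to show that each $\phi_j$ is the restriction of a M\"obius transformation of $\mathbb{C}_\infty$. At every puncture $a_i$, and at $\infty$, the map $\phi_j$ has an isolated singularity, and injectivity on a punctured neighbourhood of such a point rules out an essential singularity by Picard's great theorem. Hence $\phi_j$ extends meromorphically to a rational map $\hat\phi_j : \mathbb{C}_\infty \to \mathbb{C}_\infty$; and because $\hat\phi_j$ is injective on the open dense set $\Omega$, this rational map has degree $1$ and therefore belongs to $\Aut(\mathbb{C}_\infty)$.

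Consequently $M_j = \hat\phi_j(\Omega)$ is the complement in $\mathbb{C}_\infty$ of exactly $n+1$ points. Since $M_j \subset M$, these missing points include $\infty$ (and also $0$ in the $\mathbb{C}^{*}$ case), so $M_j = \mathbb{C} \setminus F_j$ with $|F_j| = n$ in the first case and $M_j = \mathbb{C}^{*} \setminus G_j$ with $|G_j| = n-1 \geq 1$ in the second. The nesting $M_j \subset M_{j+1}$ becomes $F_{j+1} \subset F_j$ (resp.\ $G_{j+1} \subset G_j$), and since these finite sets have constant positive cardinality they must actually be equal. Hence $\bigcup_j M_j$ remains $\mathbb{C}$ (resp.\ $\mathbb{C}^{*}$) minus a nonempty finite set, contradicting $M = \bigcup_j M_j$, and this proves that $M$ is hyperbolic. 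I expect the M\"obius extension in paragraph two to be the main technical point of the argument: everything hinges on the rigidity imposed by the fact that $\phi_j$ is globally injective on $\Omega$ and locally injective around each isolated boundary point. Once this pins the $M_j$'s down so tightly, the rest reduces to the trivial pigeonhole observation that a decreasing sequence of $n$-element sets is eventually, and hence always, constant.
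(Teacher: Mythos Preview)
Your argument is correct and follows essentially the same route as the paper: assume $M$ is $\mathbb{C}$ or $\mathbb{C}^{*}$, use injectivity of $\phi_j$ to rule out essential singularities at the punctures and at $\infty$ (the paper phrases this via cluster sets from \cite{Con2} rather than Picard, but it is the same observation), conclude that each $M_j$ is the complement of exactly $n+1$ points in $\mathbb{C}_\infty$, and then use the nesting together with the fixed cardinality to freeze the $M_j$'s and derive a contradiction. Your explicit identification of $\hat\phi_j$ as a M\"obius transformation is a slightly cleaner packaging than the paper's boundary-component count, but the two arguments are interchangeable.
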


\begin{proof}[Proof of the Claim]
If possible, assume that $M$ is non-hyperbolic. Then by remarks made in Subsection~\ref{S2}, $M$ is biholomorphic either to $\mathbb{C}$ or to $\mathbb{C}^{*}$. Identifying $M$ with its image under this biholomorphism, we may assume that $M \subset \mathbb{C}$. Then $M_1 \subset \mathbb{C}$ which is biholomorphic to $\mathbb{C} \setminus \{a_1, \ldots, a_n\}$, and hence $M_1=\mathbb{C} \setminus \{b_1, \ldots, b_n\}$, for $n$ distinct points $b_1, \ldots, b_n \in \mathbb{C}$. To see this, first note that $\pa_{\infty}\Omega$ has $n+1$ components and hence $\pa_{\infty}M_1$ also has $n+1$ components. Moreover, these components of $\pa_{\infty}M_1$ are precisely the cluster sets of $\phi_1: \Omega \to M_1$ at the points $a_k$, $0 \leq k \leq n$, where $a_0=\infty$ (see Proposition~1.8 and Proposition~1.11 in \cite[Chapter~14]{Con2}, and also note that the union of these cluster sets is $\pa_{\infty}M_1$). For each $k=0, \ldots,n$, let $\be_k$ be the cluster set of $\phi_1$ at $a_k$. Since $\phi_1 : \Omega \to M_1$ is injective, the singularity $a_k$ is either removable or a pole. Therefore, $\lim_{z \to a_k} \phi_1(z)$ exists in $\mathbb{C}_{\infty}$, and hence the  cluster set $\be_k$ must be a singleton. It follows that $M_1$ is of the above form. Now, for each $j\geq 1$, $M_1=\mathbb{C} \setminus \{b_1,\ldots, b_n\} \subset M_j \subset \mathbb{C}$, and $M_j$ is biholomorphic to $\mathbb{C}\setminus \{a_1, \ldots, a_n\}$, and hence $M_j=\mathbb{C} \setminus \{b_1, \ldots, b_n\}$. This implies that $M=\mathbb{C} \setminus \{b_1, \ldots, b_n\}$, which is a contradiction as the latter domain is hyperbolic because $n \geq 2$. This proves the claim. \phantom\qedhere\hfill $ \blacktriangleleft $
\end{proof}

Now that we have established that $M$ is hyperbolic, note that as in the proof of Theorem~\ref{T1}, the `orbit' $\{\psi_j(z_0)\}$ cannot accumulate at any `puncture' $a_k $. Here, as before, $ \psi_j: M_j \rightarrow \Omega $
are biholomorphisms and $ z_0 \in M $ is fixed. Next, we show that $\{\psi_j(z_0)\}$ cannot accumulate at $\infty$ either. Indeed, otherwise, identifying $\Omega$ with its image under the fractional linear transformation $z \mapsto 1/(z-a_1)$, we land in a situation where the orbit accumulates at a finite isolated boundary point of $\Omega$ which is impossible as noted above. Thus, the only possibility is that $ \{ \psi_j(z_0) \} $ is a relatively compact subset of $ \Omega $. In this setting, since $ \Omega $ is taut and $ M $ is hyperbolic, it follows from Lemma 3.1 of \cite{MV} that $M$ is biholomorphic to $\Omega$.
\end{proof}

\begin{rem}
It is only natural to consider the case $\Omega=\mathbb{C} \setminus A$, where $A$ is an infinite discrete subset of $\mathbb{C}$. The difficulty in solving the problem is that $\infty$ is a non-isolated boundary point of $\Omega$ in $\mathbb{C}_{\infty}$. Some possible scenarios are as follows: 

If $M$ is non-hyperbolic, then $M$ is biholomorphic either to $\mathbb{C}$ or to $\mathbb{C}^{*}$. Each of these cases is possible. To see the first case, let $M_j=\mathbb{C} \setminus \{n \in \mathbb{N} :n \geq j\}$. Then $M_j$ is an increasing sequence of domains in $\mathbb{C}$. Also, $\psi_j(z)=z-j+1$ are biholomorphisms of $M_j$ onto $\Omega=M_1$. Finally, note that $\cup M_j=\mathbb{C}$. For the second case, let $M_j=\mathbb{C}^{*} \setminus \{n \in \mathbb{N}:n \geq j\}$. Then $M_j$ is an increasing sequence of domains in $\mathbb{C}$. Also, $ \psi_j(z)=z-j+1$ are biholomorphisms of $M_j$ onto $\Omega=M_1$. Finally, note that $\cup M_j=\mathbb{C}^{*}$.

Let us now consider the case when $M$ is hyperbolic. Then, if $ \{ \psi_j(z_0) \} $ is a relatively compact subset of $ \Omega $ then as before, since $ \Omega $ is taut and $ M $ is hyperbolic, \cite[Lemma~3.1]{MV} ensures that $M$ is biholomorphic to $\Omega$. Also, note that $\{\psi_j(z_0)\}$ cannot accumulate at any point of $ A$ as in the proof of Theorem~\ref{T1}. Thus, we are left with the case when $\{\psi_j(z_0)\}$ accumulates at $\infty$. We do not know what happens in this case. It would be interesting and useful to clarify this point.
\end{rem}

\medskip

\noindent \textbf{Acknowledgements:} This project is motivated by a question raised by Gadadhar Misra during a conference talk and the authors are grateful to him. Sincere thanks are due to Kaushal Verma for several fruitful discussions throughout the course of this work. Thanks are also due to G. P. Balakumar for his insights that were instrumental in shaping this work. D. Borah is supported in part by an SERB grant (Grant No. CRG/2021/005884).

\begin{bibdiv}
\begin{biblist}

\bib{Ahlfors}{book}{
    AUTHOR = {Ahlfors, L. V.},
     TITLE = {Complex analysis},
    SERIES = {International Series in Pure and Applied Mathematics},
   EDITION = {Third},
      NOTE = {An introduction to the theory of analytic functions of one
              complex variable},
 PUBLISHER = {McGraw-Hill Book Co., New York},
      YEAR = {1978},
     PAGES = {xi+331},
      ISBN = {0-07-000657-1},
   MRCLASS = {30-01},
  MRNUMBER = {510197},
}

\bib{BBMV}{article}{
    AUTHOR = {Balakumar, G. P.},
    AUTHOR = {Borah, D.}, 
    AUTHOR = {Mahajan, P.},
    AUTHOR = {Verma, K.},
     TITLE = {Limits of an increasing sequence of complex manifolds},
   JOURNAL = {Ann. Mat. Pura Appl. (4)},
  FJOURNAL = {Annali di Matematica Pura ed Applicata. Series IV},
    VOLUME = {202},
      YEAR = {2023},
    NUMBER = {3},
     PAGES = {1381--1410},
      ISSN = {0373-3114},
   MRCLASS = {32F45},
  MRNUMBER = {4576945},
MRREVIEWER = {Armen Edigarian},
       DOI = {10.1007/s10231-022-01285-9},
       URL = {https://doi.org/10.1007/s10231-022-01285-9},
     
}

\bib{BS-39}{article}{
   author={Behnke, H.},
   author={Stein, K.},
   title={Konvergente Folgen von Regularit\"atsbereichen und die
   Meromorphiekonvexit\"at},
   language={German},
   journal={Math. Ann.},
   volume={116},
   date={1939},
   number={1},
   pages={204--216},
   issn={0025-5831},
   review={\MR{1513225}},
   doi={10.1007/BF01597355},
}

\bib{BS-49}{article}{
    AUTHOR = {Behnke, H.},
    AUTHOR = {Stein, K.},
     TITLE = {Entwicklung analytischer {F}unktionen auf {R}iemannschen
              {F}l\"{a}chen},
   JOURNAL = {Math. Ann.},
  FJOURNAL = {Mathematische Annalen},
    VOLUME = {120},
      YEAR = {1949},
     PAGES = {430--461},
      ISSN = {0025-5831},
   MRCLASS = {30.0X},
  MRNUMBER = {29997},
MRREVIEWER = {S. Bergman},
       DOI = {10.1007/BF01447838},
       URL = {https://doi.org/10.1007/BF01447838},
}
  \bib{BT}{article}{
    AUTHOR = {Behnke, H.},
    AUTHOR = {Thullen, P.},
     TITLE = {Zur {T}heorie der {S}ingularit\"{a}ten der {F}unktionen mehrerer
              komplexen {V}er\"{a}nderlichen},
   JOURNAL = {Math. Ann.},
  FJOURNAL = {Mathematische Annalen},
    VOLUME = {108},
      YEAR = {1933},
    NUMBER = {1},
     PAGES = {91--104},
      ISSN = {0025-5831},
   MRCLASS = {DML},
  MRNUMBER = {1512836},
       DOI = {10.1007/BF01452824},
       URL = {https://doi.org/10.1007/BF01452824},
}

\bib{Behrens}{article}{
    AUTHOR = {Behrens, M.},
     TITLE = {A generalisation of a theorem of {F}orn\ae ss-{S}ibony},
   JOURNAL = {Math. Ann.},
  FJOURNAL = {Mathematische Annalen},
    VOLUME = {273},
      YEAR = {1985},
    NUMBER = {1},
     PAGES = {123--130},
      ISSN = {0025-5831},
   MRCLASS = {32H15},
  MRNUMBER = {814199},
MRREVIEWER = {Harold P. Boas},
       DOI = {10.1007/BF01455918},
       URL = {https://doi.org/10.1007/BF01455918},
}

\bib{Con2}{book}{
   author={Conway, J. B.},
   title={Functions of one complex variable. II},
   series={Graduate Texts in Mathematics},
   volume={159},
   publisher={Springer-Verlag, New York},
   date={1995},
   pages={xvi+394},
   isbn={0-387-94460-5},
   review={\MR{1344449}},
   doi={10.1007/978-1-4612-0817-4},
}

{\bib{DS}{article}{
   author={Diederich, K.},
   author={Sibony, N.},
   title={Strange complex structures on Euclidean space},
   journal={J. Reine Angew. Math.},
   volume={311(312)},
   date={1979},
   pages={397--407},
   issn={0075-4102},
   review={\MR{549981}},
}}

\bib{For-77}{article}{
   author={Forn\ae ss, J. E.},
   title={$2$ dimensional counterexamples to generalizations of the Levi
   problem},
   journal={Math. Ann.},
   volume={230},
   date={1977},
   number={2},
   pages={169--173},
   issn={0025-5831},
   review={\MR{0486625}},
   doi={10.1007/BF01370661},
}

\bib{For-76}{article}{
   author={Forn\ae ss, J. E.},
   title={An increasing sequence of Stein manifolds whose limit is not
   Stein},
   journal={Math. Ann.},
   volume={223},
   date={1976},
   number={3},
   pages={275--277},
   issn={0025-5831},
   review={\MR{0417448}},
   doi={10.1007/BF01360958},
}

\bib{FP}{article}{
    AUTHOR = {Forn\ae ss, J. E.}
    AUTHOR= {Pal, R.},
     TITLE = {Increasing sequences of complex manifolds with uniform squeezing constants and their Bergman spaces},
   Note = {https://arxiv.org/abs/2407.02130},  
}

\bib{FS1981}{article}{
    AUTHOR = {Forn\ae ss, J. E.}
    AUTHOR= {Sibony, N.},
     TITLE = {Increasing sequences of complex manifolds},
   JOURNAL = {Math. Ann.},
  FJOURNAL = {Mathematische Annalen},
    VOLUME = {255},
      YEAR = {1981},
    NUMBER = {3},
     PAGES = {351--360},
      ISSN = {0025-5831},
   MRCLASS = {32H15 (32H20)},
  MRNUMBER = {615855},
MRREVIEWER = {Akio Kodama},
       DOI = {10.1007/BF01450708},
       URL = {https://doi.org/10.1007/BF01450708},
}

\bib{For-Sto}{article}{
   author={Forn\ae ss, J. E.},
   author={Stout, E. L.},
   title={Polydiscs in complex manifolds},
   journal={Math. Ann.},
   volume={227},
   date={1977},
   number={2},
   pages={145--153},
   issn={0025-5831},
   review={\MR{435441}},
   doi={10.1007/BF01350191},
}

\bib{Fridman}{article}{
AUTHOR = {Fridman, B. L.},
     TITLE = {Biholomorphic invariants of a hyperbolic manifold and some
              applications},
   JOURNAL = {Trans. Amer. Math. Soc.},
  FJOURNAL = {Transactions of the American Mathematical Society},
    VOLUME = {276},
      YEAR = {1983},
    NUMBER = {2},
     PAGES = {685--698},
      ISSN = {0002-9947},
   MRCLASS = {32H20 (32F15)},
  MRNUMBER = {688970},
MRREVIEWER = {J. T. Davidov},
       DOI = {10.2307/1999076},
       URL = {https://doi.org/10.2307/1999076},
}

\bib{Fridman2}{article}{
   AUTHOR = {Fridman, B. L.},
     TITLE = {An approximate {R}iemann mapping theorem in {${\bf C}^n$}},
   JOURNAL = {Math. Ann.},
  FJOURNAL = {Mathematische Annalen},
    VOLUME = {275},
      YEAR = {1986},
    NUMBER = {1},
     PAGES = {49--55},
      ISSN = {0025-5831},
   MRCLASS = {32H05},
  MRNUMBER = {849053},
MRREVIEWER = {M. Herv\'{e}},
       DOI = {10.1007/BF01458582},
       URL = {https://doi.org/10.1007/BF01458582},
}

\bib{GKK}{book}{
   author={Greene, Robert E.},
   author={Kim, Kang-Tae},
   author={Krantz, Steven G.},
   title={The geometry of complex domains},
   series={Progress in Mathematics},
   volume={291},
   publisher={Birkh\"auser Boston, Ltd., Boston, MA},
   date={2011},
   pages={xiv+303},
   isbn={978-0-8176-4139-9},
   review={\MR{2799296}},
   doi={10.1007/978-0-8176-4622-6},
}

\bib{JK-1994}{article}{
    AUTHOR = {Joseph, J. E.},
    AUTHOR = {Kwack, M. H.},
     TITLE = {Hyperbolic imbedding and spaces of continuous extensions of
              holomorphic maps},
   JOURNAL = {J. Geom. Anal.},
  FJOURNAL = {The Journal of Geometric Analysis},
    VOLUME = {4},
      YEAR = {1994},
    NUMBER = {3},
     PAGES = {361--378},
      ISSN = {1050-6926},
   MRCLASS = {32H20 (32H15)},
  MRNUMBER = {1294332},
MRREVIEWER = {Marco Abate},
       DOI = {10.1007/BF02921586},
       URL = {https://doi.org/10.1007/BF02921586},
}

\bib{Kob}{book}{
    AUTHOR = {Kobayashi, S.},
     TITLE = {Hyperbolic manifolds and holomorphic mappings},
    SERIES = {Pure and Applied Mathematics},
    VOLUME = {2},
 PUBLISHER = {Marcel Dekker, Inc., New York},
      YEAR = {1970},
     PAGES = {ix+148},
   MRCLASS = {32.60},
  MRNUMBER = {277770},
MRREVIEWER = {W. Kaup},
}

\bib{Liu}{article}{
   author={Liu, B.},
   title={Two applications of the Schwarz lemma},
   journal={Pacific J. Math.},
   volume={296},
   date={2018},
   number={1},
   pages={141--153},
   issn={0030-8730},
   review={\MR{3803725}},
   doi={10.2140/pjm.2018.296.141},
}

\bib{MV}{article}{
    AUTHOR = {Mahajan, P.},
    AUTHOR = {Verma, K.},
     TITLE = {Some aspects of the {K}obayashi and {C}arath\'{e}odory metrics on
              pseudoconvex domains},
   JOURNAL = {J. Geom. Anal.},
  FJOURNAL = {Journal of Geometric Analysis},
    VOLUME = {22},
      YEAR = {2012},
    NUMBER = {2},
     PAGES = {491--560},
      ISSN = {1050-6926},
   MRCLASS = {32F45 (32Q45)},
  MRNUMBER = {2891736},
MRREVIEWER = {Gregor Herbort},
       DOI = {10.1007/s12220-010-9206-4},
       URL = {https://doi.org/10.1007/s12220-010-9206-4},
}

\bib{Roy}{inproceedings}{
    AUTHOR = {Royden, H. L.},
     TITLE = {Remarks on the {K}obayashi metric},
 BOOKTITLE = {Several complex variables, {II} ({P}roc. {I}nternat. {C}onf.,
              {U}niv. {M}aryland, {C}ollege {P}ark, {M}d., 1970)},
    SERIES = {Lecture Notes in Math., Vol. 185},
     PAGES = {125--137},
 PUBLISHER = {Springer, Berlin-New York},
      YEAR = {1971},
   MRCLASS = {32C10},
  MRNUMBER = {304694},
MRREVIEWER = {D. A. Pelles},
}

\end{biblist}
\end{bibdiv}

\end{document}